\newtheorem{theorem}{Theorem}
\newtheorem{definition}{Definition}
\newtheorem{lemma}{Lemma}
\newtheorem{exmp}{Example}
\newcommand{\ppp}{P^{++}}
\newcommand{\pp}{P^{+}}
\newcommand{\hr}{$H$-representation}
\newcommand{\hp}{$\mathcal{H}(P)$}
\newcommand{\vp}{$\mathcal{V}(P)$}
\newcommand{\hq}{$\mathcal{H}(Q)$}
\newcommand{\vr}{$V$-representation}
\newcommand{\lrs}{\emph{lrslib}\xspace}
\newcommand{\cdd}{\emph{cddlib}\xspace}
\definecolor{darkblue}{rgb}{0,0,0.6}
\title{$HV$-symmetric polyhedra and bipolarity}
\author{David Avis}
\begin{document}
\maketitle

\begin{abstract}
A polyhedron is pointed if it contains at least one vertex.
Every pointed polyhedron $P$ in $R^n$ can be described by an \hr \\
\hp~consisting of half spaces or equivalently by a \vr~\vp~ \\
consisting of the
convex hull of a set of vertices and extreme rays. 
We can define 
matrices $H(P)$ and $V(P)$, each with $n+1$ columns,
that encode these representations. 
Define polyhedron $Q$ by setting $H(Q)=V(P)$.
We show that $Q$ is the polar of $P$.
Call $P$ $HV$-symmetric if $Q$ is pointed and $V(Q)$ in turn encodes \hp.
It is well known and often stated that
polytopes that
contain the origin in their interior 
and pointed polyhedral cones are $HV$-symmetric.
We show here that,
more generally, a 
pointed polyhedron with pointed polar is $HV$-symmetric
if and only if
it contains the origin. We prove this 
using Minkowski's bipolar equation and discuss implications
for the vertex and facet enumeration problems.

\noindent{}\textbf{Keywords:} Polyhedra, bipolarity, $HV$-symmetric, vertex enumeration, facet enumeration.
\end{abstract}

We give basic definitions here,
for a comprehensive treatment of the background material 
the reader is referred to Barvinok \cite{Bar}, Schrijver \cite{Schrijver} or Ziegler
\cite{Ziegler}. 
Let $P$ be a non-empty possibly unbounded rational polyhedron in $R^n$ 
defined by its
{\em\hr}
\begin{equation}
\label{hrep}
\mathcal{H}(P):~~~~~~b + Ax \ge 0 ~~~~~~
Bx \ge 0 
\end{equation} 
for an $m_A \times n$ matrix $A$, $m_B \times n$ matrix $B$ and
a column vector $b$ with non-zero components, all with rational entries. 
If $P$ is full dimensional and (\ref{hrep}) contains
no redundant inequalities this representation
is unique up to permuting rows and/or scaling any row by a positive constant.
In this case each inequality supports a {\em facet} of $P$.
If $P$ is not full dimensional the representation (\ref{hrep}) contains
equations and is not unique. A good example is the $n$-city Travelling
Salesman Polytope which contains $n$ linearly independent equations
and has more than one standard~\hr. 

We will encode \hp~
by the
$(m_A+m_B) \times (n+1)$ matrix
\begin{equation}
\label{hr}
H(P)~=~
\begin{bmatrix}
b& A \\
0_{m_B} & B
\end{bmatrix}
\end{equation}
where $0_{m_B}$ is a column vector of $m_B$ zeroes.
If $0 \in P$ then $b > 0$ and by scaling we can replace $b$
by $1_{m_A}$, a column of $m_A$ ones.

A non-empty polyhedron $Q$ that contains no line is called {\em pointed}.
If $Q$ contains a line
it can be uniquely represented
$Q=L+P$ where $L$ is a linear space and $P$ is either 
empty or a pointed polyhedron
in the linear space orthogonal to $L$ (e.g. Sec. 8.2 in \cite{Schrijver}).
A point $x \in P$ is a { \em vertex} of $P$ if it is the unique solution
of $n$ inequalities in (\ref{hrep}) expressed as equations. 
It is known that a non-empty polyhedron is pointed if and only if
it contains at least one vertex (Sec. 8.5 in \cite{Schrijver}).
If $P$ is unbounded it contains {\em extreme rays}.
An extreme ray emanates from a vertex $x \in P$
and is generated by a non-zero $r \in R^n$
for which $x + tr \in P$ for any scalar $t \ge 0$. 
If $r$ generates an extreme ray we must have $Ar \ge 0$ and $Br \ge 0$,
with $n-1$ inequalities satisfied as linearly independent equations. 
This set of vertices and extreme rays give a {\em \vr} \vp. 
In this note we consider only polyhedra with non-empty \vp,
in other words pointed polyhedra.

For a pointed polyhedron $P$ we define
an $m_S \times n$ matrix $S$ containing its {\em vertices} and 
an $m_R \times n$ matrix $R$ containing its {\em extreme rays}.
The Minkowski-Weyl theorem 
states that $P$ can be equivalently represented by
$H(P)$ and by its \vr, where:
\[
P = conv(S) + cone(R).
\]
The general form of the theorem applies also to
non-pointed $P$ in which case, as we saw above,
a linear space $L$ is also required
in the \vr. We do not consider this case in this note.

If $m_R=0$ $P$ is a polytope and if $m_S=1$ it is a cone.
We encode \vp~ by the $(m_S+m_R) \times (n+1)$ matrix
\begin{equation}
\label{vr}
V(P)~=~
\begin{bmatrix}
1_ {m_S}& S \\
0_{m_R} & R
\end{bmatrix}
\end{equation}
An interpretation of $V(P)$ is that for any $x \in P$ there exist
non-negative $\lambda \in R^{m_S}$ and $\mu \in R^{m_R}$ such
that $[1,x]= [\lambda, \mu ] V(P)$.
The representation (\ref{vr}) is non-empty and unique, up to permuting
rows and/or scaling rows of $R$ by a positive constant, for each pointed
polyhedron $P$ (Sec. 8.6 in \cite{Schrijver}).
If $P$ is not pointed then, for the purposes of this note,
$V(P)$ is undefined.

Each facet of $P$ is defined by a set of $n$ vertices and/or rays
which are affinely independent and span it.
Computing \vp~ from \hp~ is called the {\em vertex enumeration
problem} and the reverse transformation the {\em facet enumeration problem}.
The encodings in (\ref{hr}) and (\ref{vr}) are typical of those used in 
open source software for these problems
such as \cdd\footnote{\url{https://github.com/cddlib/cddlib}}
and \lrs\footnote{\url{https://cgm.cs.mcgill.ca/~avis/C/lrs.html}}.

Any matrix of the form (\ref{vr}) can be used to define two
(usually) different polyhedra depending on whether it is interpreted as
an \hr~ or a \vr. 
Accordingly define a polyhedron $Q$ by setting 
$H(Q)=V(P)$ so that we have              
\begin{equation}
\label{hrq}
\mathcal{H}(Q):~~~~~1_{m_S} + Sx \ge 0 ~~~~~ Rx \ge 0
\end{equation}
Note that $0 \in Q$.

\begin{definition}
\label{def}
Let $P$ be a pointed polyhedron whose \vp~is encoded by $V(P)$ and 
let $Q$ be the polyhedron with \hq~encoded by $V(P)$.
If $V(Q)$~encodes an \hp~of $P$
we call $P$ $HV$-symmetric.
\end{definition}
Note that if $Q$ is not pointed then $V(Q)$ is undefined and so $P$
is not $HV$-symmetric.
On the other hand if $P$ is $HV$-symmetric then $Q$ is pointed and
so $Q$ is $HV$-symmetric also.
The following two examples illustrate the definition.
\begin{exmp}
\label{eg1}
Consider the wedge $P$ in $R^2$ with 
vertex $(1,1)$ and extreme rays $(1,0), (0,1)$
which has an 
\hr
\begin{align*}
-1 + x_1~~~~~~ &\ge 0 \\
-1 ~~~~~~+ x_2  &\ge 0 .
\end{align*}
We have
\begin{equation}
\nonumber
H(P) =
\begin{bmatrix}
-1 &  1& 0\\
-1 &  0&  1 
\end{bmatrix}
~~~~H(Q):= V(P)=
\begin{bmatrix}
1 &  1&  1  \\
0 & 1&  0 \\
0 &  0&  1
\end{bmatrix}
~~~~ V(Q)=
\begin{bmatrix}
1 &  0&  0  \\
0 & 1&  0 \\
0 &  0&  1
\end{bmatrix}
\end{equation}
Since $V(Q)$ is not an encoding of an \hp~of $P$, $P$ is not $HV$-symmetric.
Note that the origin $(0,0)$ is not contained in $P$.
$\square$
\end{exmp}

\begin{exmp}
\label{eg2}
Consider the pyramid $P$ in $R^3$ with base containing
vertices $(0,1,0), (-1,0,0), (1,0,0)$ and apex $(0,0,1)$ which has
\hr
\begin{align*}
1 + x_1 -x_2 -x_3 &\ge 0 \\
1 - x_1 -x_2 -x_3 &\ge 0 \\
~~~~~~~~~~~~~~~x_3 &\ge 0 \\
~~~~~~~~~    ~x_2~~~~~ &\ge 0.
\end{align*}
$P$ is $HV$-symmetric since we have
\begin{equation}
\nonumber
H(Q) := V(P) =
\begin{bmatrix}
1 &  0&  1 & 0 \\
1 & -1&  0 & 0 \\
1 &  1&  0 & 0 \\
1 &  0&  0 & 1 
\end{bmatrix}
~~~~V(Q) =
\begin{bmatrix}
1 &  1& -1 & -1\\
1 & -1& -1 & -1 \\
0 &  0&  0 & 1 \\
0 &  0&  1 & 0 
\end{bmatrix}
=H(P)
\end{equation}
Note that the origin $(0,0,0)$ is on the boundary of $P$, 
the midpoint of vertices
$(-1,0,0)$ and $(1,0,0)$, and that $Q$ is unbounded 
since it has two extreme rays.
On the other hand the origin is interior to $Q$ and $P$ is bounded.
$\square$
\end{exmp}
We observe from Example \ref{eg1} that
if the origin is outside of $P$ then there must be some facet of the form
$-1+ax \ge 0$. Matrix $H(P)$ has a negative number in column one and cannot
be a \vr~
so $P$ is not $HV$-symmetric. 
We will prove that in all other cases (e.g. Example \ref{eg2}) $P$ is
$HV$-symmetric.

We will make use of a basic result on the polarity of convex sets.
Given any polyhedron $P$ in $R^n$ its polar\footnote{ 
The polar is often written as $P^o= \{ z \in R^n : zx \le 1~~ \forall x \in P \}$, so that $\pp = - P^o$. } 
\begin{equation}
\label{polar}
P^+ = \{ z \in R^n : 1 + zx \ge 0~~ \forall x \in P \}.
\end{equation}
According to Fenchel's historical survey \cite{Fen} (Sec. 10), 
polarity dates back at least to the work of Minkowski.
The connection of polarity 
to the $HV$ conversion problem is given by the
following result which shows that $Q$ as defined above is $\pp$.
For the case where $P$ contains the origin this is Thm. 9.1(iv)
of \cite{Schrijver}. This condition is not necessary and
we give a proof 
of this more general result for completeness.
\begin{lemma}
\label{fun}
Let $P$ be a pointed polyhedron with \vp~encoded as $V(P)$ (\ref{vr}). Then
\begin{equation}
\label{lem}
\pp = Q := \{ z \in R^n : 1 + vz \ge 0, \forall v \in S,  rz \ge 0, \forall r \in R \}
\end{equation}
and so $V(P)$ is an encoding of $\mathcal{H}(P^+)$.
\end{lemma}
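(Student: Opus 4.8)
The goal is to show $\pp = Q$ where $Q$ is the polyhedron cut out by the inequalities $1 + vz \ge 0$ for vertices $v \in S$ and $rz \ge 0$ for extreme rays $r \in R$. The natural strategy is a double inclusion, using the $V$-representation of $P$ to rewrite the universally quantified condition defining $\pp$.

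First I would prove $\pp \subseteq Q$, which is the easy direction. Each vertex $v \in S$ lies in $P$, so by the definition (\ref{polar}) of the polar, any $z \in \pp$ satisfies $1 + vz \ge 0$. For an extreme ray $r \in R$, pick any vertex $x_0 \in S$ (which exists since $P$ is pointed and $m_S \ge 1$ whenever $m_R \ge 1$); then $x_0 + t r \in P$ for all $t \ge 0$, so $1 + z x_0 + t\, zr \ge 0$ for all $t \ge 0$, forcing $zr \ge 0$. Hence $z \in Q$.

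For the reverse inclusion $Q \subseteq \pp$, take $z \in Q$ and an arbitrary $x \in P$. By the interpretation of $V(P)$ given after (\ref{vr}), there exist nonnegative $\lambda \in R^{m_S}$ and $\mu \in R^{m_R}$ with $\sum_i \lambda_i = 1$ (the constraint imposed by the first column) such that $x = \sum_i \lambda_i s_i + \sum_j \mu_j r_j$, where $s_i$ are the rows of $S$ and $r_j$ the rows of $R$. Then
\begin{equation}
\nonumber
1 + zx = \sum_i \lambda_i (1 + z s_i) + \sum_j \mu_j\, (z r_j) \ge 0,
\end{equation}
since each term is a product of a nonnegative coefficient with a nonnegative quantity (using $z \in Q$ and $\sum_i \lambda_i = 1$). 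As $x \in P$ was arbitrary, $z \in \pp$. Combining the two inclusions gives $\pp = Q$.

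Finally, observe that the system defining $Q$ in (\ref{lem}) is exactly the $H$-representation $\mathcal{H}(Q)$ encoded by the matrix $V(P)$ as displayed in (\ref{hrq}): the rows $[1, s_i]$ of $V(P)$ become the inequalities $1 + s_i z \ge 0$ and the rows $[0, r_j]$ become $r_j z \ge 0$. Since $Q = \pp$, the matrix $V(P)$ is an encoding of $\mathcal{H}(P^+)$, as claimed. The only subtlety to watch is the edge case $m_S = 0$ (when $P$ is a cone): then the ``$\sum \lambda_i = 1$'' bookkeeping is vacuous, every $x \in P$ is a nonnegative combination of rays, and one should note $0 \in P$ so that the argument picking a vertex for the ray direction is replaced by using $x = 0 \in P$ directly; this is a trivial check rather than a genuine obstacle. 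I do not anticipate a hard step here — the lemma is essentially a direct unwinding of the $V$-representation against the definition of the polar.
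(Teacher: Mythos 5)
Your proof is correct and takes essentially the same route as the paper: both directions are handled by the same double-inclusion argument, using $v+tr\in P$ to extract $1+vz\ge 0$ and $rz\ge 0$ for one inclusion and the $V$-representation decomposition $x=\sum_i\lambda_i s_i+\sum_j\mu_j r_j$ for the other. Your extra remark on the cone case $m_S=0$ is a harmless refinement the paper leaves implicit.
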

\begin{proof}
\noindent
($\pp \subseteq Q $)
Choose any $z \in \pp$.
For any $v \in S, r \in R, t \ge 0$, we have $v+tr \in P$.
Therefore by (\ref{polar}) $1+vz+trz \ge 0$ for all $t \ge 0$ and hence
$1 + vz \ge 0$ and $rz \ge 0$. Therefore $z \in Q$.

\noindent
($Q \subseteq \pp $)
Choose any $z \in Q$ and $x \in P$. We can write
\begin{align}
\nonumber
x &= \sum_{v \in S} \lambda_v v + \sum_{r \in R} \mu_r r,~~~~ 1_{|S|} \lambda =1, ~\lambda \ge 0,~ \mu \ge 0 \\
\nonumber
1 + zx &=1 + z(\sum_{v \in S} \lambda_v v + \sum_{r \in R} \mu_r r) \\
\nonumber
       &= \sum_{v \in S} \lambda_v( 1 + vz) + \sum_{r \in R} \mu_r r z \\
\nonumber
       & \ge 0
\end{align}
since $1 + vz \ge 0, \forall v \in S,~  rz \ge 0, \forall r \in R$.
$\square$
\end{proof}
A consequence of the lemma is that $P$ is bounded
if and only if the origin is interior to $\pp$.
Even though the output $V(P)$ of an $H$ to $V$ conversion always encodes 
$\mathcal{H}(P^+)$,
it is not true in general that the output $H(P)$
of the reverse conversion 
is an encoding of $\mathcal{V}(P^+)$ (see Example \ref{eg1}).
To see when this happens we need a slight extension of a  
fundamental result known as the {\em bipolar equation} which
states that if the origin is interior to $P$ then $\ppp=P$.
Our proof 
follows that of Barvinok\cite{Bar} (Chapter IV, Thm. 1.2)
extended to remove the condition that the origin is interior.
We denote the {\em closure} of a
set $X \subseteq R^n$ by $cl(X)$.

\begin{lemma}[Extended bipolar equation\footnote{The lemma and proof
in fact apply to any closed convex set. An even more
general form requiring neither convexity nor closure is given
in Hoheisel Thm. 3.73 \cite{Hoheisel}}]
\label{bipolar}
If $P$ is a pointed polyhedron then 
\begin{equation}
\ppp=cl(conv(P \cup \{0\})).
\end{equation}
\end{lemma}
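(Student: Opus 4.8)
The plan is to prove the two inclusions separately, using the already-established Lemma~\ref{fun} together with the easy direction of bipolarity.

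First I would establish the inclusion $cl(conv(P \cup \{0\})) \subseteq \ppp$. Since $0 \in \pp$ trivially (take $z = 0$ in the definition of $\pp$ as a polar; actually one checks directly that $0$ satisfies $1 + 0 \cdot x \ge 0$ for all $x \in P$), and since by the definition of the polar every $x \in P$ satisfies $1 + zx \ge 0$ for all $z \in \pp$, we get $P \cup \{0\} \subseteq \ppp$. Now $\ppp$ is itself a polar, hence closed and convex, so it contains the closed convex hull of $P \cup \{0\}$. This direction is essentially free and requires no polyhedrality.

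The reverse inclusion $\ppp \subseteq cl(conv(P \cup \{0\}))$ is where the work lies. Write $P_0 = cl(conv(P \cup \{0\}))$. Since $P$ is a pointed polyhedron, so is $P_0$: adding the single point $0$ to the generator list keeps the $V$-representation finite, so $P_0$ is a polyhedron, and it is pointed because it contains $0$ (a polyhedron containing the origin is pointed iff... — actually one should just note $P_0$ has a finite vertex/ray description and is pointed since $P$ is, the ray directions being unchanged). Crucially, $0 \in P_0$, so by the remark following equation~(\ref{hr}) the matrix $H(P_0)$ can be taken with first column all ones, and by the standard bipolar equation (Barvinok IV.1.2, valid once the origin belongs to the set — or more carefully, valid here because $P_0$ is closed, convex, and contains $0$) we have $(P_0)^{++} = P_0$. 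It remains to compare $\ppp$ with $(P_0)^{++}$; since $P \subseteq P_0$ we get $\pp \supseteq (P_0)^+$, hence $\ppp \subseteq (P_0)^{++} = P_0$, which is exactly the desired inclusion. Combining the two inclusions gives $\ppp = P_0 = cl(conv(P \cup \{0\}))$.

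The main obstacle is making precise the invocation of the \textbf{ordinary} bipolar equation for $P_0$: Barvinok states it for compact convex bodies with the origin in the interior, whereas here $P_0$ may be unbounded and may have $0$ only on its boundary. The honest fix is to prove, or cite in the sharper form, the statement $(X)^{++} = X$ for any closed convex set $X$ containing the origin — the footnote already points to Hoheisel Thm.~3.73 for an even more general version — or alternatively to run the separation-theorem argument directly: if $y \notin P_0$ then by the separating hyperplane theorem applied to the closed convex set $P_0$ there is a linear functional $z$ with $zx < zy$ for all $x \in P_0$; since $0 \in P_0$ one has $zy > 0$, and normalizing so that $\sup_{x \in P_0} zx \le 1$ (possible precisely because $0 \in P_0$ forces the supremum to be $\ge 0$, and finiteness comes from the separation) yields $z \in \pp$ while $1 + zy > $ — wait, one wants $zy > 1$ so that $y \notin \ppp$. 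This separation argument is the substantive step; the polyhedral structure is only needed to guarantee $P_0$ is again a pointed polyhedron so that the statement sits inside the paper's framework, but the proof itself goes through for arbitrary closed convex sets containing the origin, as the footnote indicates.
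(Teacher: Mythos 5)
Your first inclusion is exactly the paper's: $P\cup\{0\}\subseteq\ppp$, and since $\ppp$ is a polar, hence closed and convex, it contains $cl(conv(P\cup\{0\}))$. Your reduction of the reverse inclusion to the identity $(P_0)^{++}=P_0$ via $P\subseteq P_0\Rightarrow (P_0)^+\subseteq\pp\Rightarrow\ppp\subseteq(P_0)^{++}$ is also correct as far as it goes, but, as you yourself observe, it is circular in this context: since $P_0$ is a closed convex set containing $0$, the claim $(P_0)^{++}=P_0$ \emph{is} the extended bipolar equation for $P_0$, and Barvinok's Theorem IV.1.2 as stated (origin interior, or at least the hypotheses you quote) does not cover it. The paper does not cite that theorem; it reruns the separation argument without the interiority hypothesis, which is precisely your second route. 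So the substance of your proposal lives entirely in the half-finished separation paragraph.

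That paragraph is where the gap is, and it is a sign/orientation error relative to the paper's definition of the polar, not merely a loose end. The paper's polar is $\pp=\{z: 1+zx\ge 0\ \forall x\in P\}=\{z: zx\ge -1\ \forall x\in P\}$, i.e.\ $-P^o$ in the classical convention. Your normalization $\sup_{x\in P_0}zx\le 1$ therefore places $z$ in $P^o$, not in $\pp$, and the condition you need to exclude $y$ from $\ppp$ is the existence of $b\in\pp$ with $1+by<0$, not $zy>1$ --- which is exactly where your argument trails off. The correct orientation: for $u\notin cl(P_0)$, strict separation of the point $u$ from the closed convex set $cl(P_0)$ gives $c\ne 0$ and $d$ with $d+cx>0$ for all $x\in P_0$ and $d+cu<0$; evaluating at $x=0\in P_0$ forces $d>0$, so $b:=d^{-1}c$ satisfies $1+bx>0$ on $P\subseteq P_0$, whence $b\in\pp$, while $1+bu<0$ shows $u\notin\ppp$. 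The role of $0\in P_0$ is exactly to make $d>0$ so that the normalization is legitimate; this replaces the interiority hypothesis in Barvinok. With that step written out your proof coincides with the paper's; note also that polyhedrality is never used (only closedness and convexity of $cl(P_0)$), which is consistent with the paper's footnote, so your digression about $P_0$ being a pointed polyhedron, while true modulo the fact that $conv(P\cup\{0\})$ itself need not be closed when $P$ is unbounded, is not needed.
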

\begin{proof}
Define $P_0=conv(P \cup \{0\})$. By the definition of polarity $0 \in \ppp$.
Next for any $x \in P$ we have $1+zx \ge 0$ for all $z \in \pp$ and so
$x \in \ppp$. It follows by convexity that $P_0 \subseteq \ppp$.
Since $\ppp$ is closed it follows that $cl(P_0) \subseteq \ppp $.

On the other hand consider
any given $u \notin cl(P_0)$.
By the separating hyperplane theorem (see e.g. \cite{Bar} Thm. III.1.3)
there is a vector $c \neq 0$ and scalar $d$ such that $d +cx > 0$
for all  $x \in P_0$
and $d+cu < 0$. Since $0 \in P_0$ we have $d>0$.
Set $b = d^{-1}c$. We have $1 + bx > 0$ for all $x \in P_0$ and so 
$1 + bx > 0$ for all $x \in P$ and hence $b \in \pp$.
However $1 + bu < 0$ which implies that $u \notin \ppp$.
\end{proof}
We may now state and prove our main result on $HV$-symmetry.
\begin{theorem}
\label{int}
The following statements are equivalent for pointed polyhedra $P$
and $\pp$:
\begin{enumerate}[nosep,label=(\alph*)]
\item
$0 \in P$
\item
$P = \ppp$
\item
$V(P^+)$ encodes $\mathcal{H}(P)$
\item
$P$ is $HV$-symmetric.   
\end{enumerate}
\end{theorem}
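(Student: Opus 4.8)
The plan is to prove the cycle of implications (a)~$\Rightarrow$~(b)~$\Rightarrow$~(c)~$\Rightarrow$~(d)~$\Rightarrow$~(a), which yields all four equivalences at once; the whole argument is short once the two preceding lemmas are available, namely Lemma~\ref{fun} (which identifies $Q$ with $\pp$ and shows that $V(P)$ always encodes $\mathcal{H}(\pp)$) and Lemma~\ref{bipolar} (the extended bipolar equation). It is worth recording that the arrow (b)~$\Rightarrow$~(a), obtained only indirectly by the cycle, is in fact immediate in isolation: $x=0$ satisfies $1+zx\ge 0$ for every $z$, so $0\in\ppp$ for every $P$, and hence $P=\ppp$ forces $0\in P$.

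For (a)~$\Rightarrow$~(b): if $0\in P$ then $conv(P\cup\{0\})=conv(P)=P$ because $P$ is convex, and $cl(P)=P$ because a polyhedron is closed, so Lemma~\ref{bipolar} gives $\ppp=cl(conv(P\cup\{0\}))=P$. For (b)~$\Rightarrow$~(c): apply Lemma~\ref{fun} with $\pp$ in the role of $P$; it states that $V(\pp)$ is an encoding of $\mathcal{H}\big((\pp)^{+}\big)=\mathcal{H}(\ppp)$, which by hypothesis (b) equals $\mathcal{H}(P)$.

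For (c)~$\Rightarrow$~(d): by Lemma~\ref{fun} the polyhedron $Q$ of Definition~\ref{def}, defined by $H(Q)=V(P)$, is exactly $\pp$, so $V(Q)$ is the same encoding as $V(\pp)$; by (c) it encodes $\mathcal{H}(P)$, which is precisely the condition in Definition~\ref{def} for $P$ to be $HV$-symmetric. For (d)~$\Rightarrow$~(a): every matrix of the form (\ref{vr}) has all first-column entries equal to $0$ or $1$, hence nonnegative, so when $V(Q)$ is read as an \hr, the polyhedron it defines has all its inequalities of the shape $1+vx\ge 0$ or $rx\ge 0$, and the origin satisfies every one of them; if $P$ is $HV$-symmetric that polyhedron is $P$, and therefore $0\in P$. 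This is the observation made after Example~\ref{eg1}, turned into a proof step.

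The step requiring care is the appeal to Lemma~\ref{fun} for $\pp$ inside (b)~$\Rightarrow$~(c), which presupposes that $\pp$ is pointed so that $V(\pp)$ is defined. This holds automatically when $P$ is full-dimensional, since then the vertices and extreme rays of $P$ span $R^n$ and $\pp$ can contain no line; in the lower-dimensional case one should first fix the standard conventions for encoding lineality spaces and equations in $V$- and $H$-representations and then repeat the argument. Apart from this bookkeeping, the theorem is a direct corollary of Lemmas~\ref{fun} and~\ref{bipolar}.
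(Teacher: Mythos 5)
Your proof is correct and follows essentially the same route as the paper: (a)$\Rightarrow$(b) via Lemma~\ref{bipolar}, (b)$\Rightarrow$(c) and (c)$\Rightarrow$(d) via Lemma~\ref{fun}, and (d)$\Rightarrow$(a) from the sign of the first column of a matrix of the form (\ref{vr}) (the paper phrases this last step as a contrapositive, you phrase it directly --- it is the same observation made after Example~\ref{eg1}). Your closing caveat about the pointedness of $\pp$ in the step (b)$\Rightarrow$(c) is a fair subtlety that the paper itself does not address; as you note, it is automatic when $P$ is full-dimensional.
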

\begin{proof}
(a) $\Rightarrow$ (b): from Lemma \ref{bipolar} since $P=cl(conv(P \cup \{0\}))$.

(b) $\Rightarrow$ (c): Since $\pp$ is pointed by assumption,
Lemma \ref{fun} 
states that $V(\pp)$ encodes $\mathcal{H}(\ppp)=$\hp.

(c) $\Rightarrow$ (d): Lemma \ref{fun} states that $V(P)$
encodes $H(\pp)$, so in Definition \ref{def} we have $Q=P^+$.
Substituting in (c) we have that $V(Q)$ encodes $\mathcal{H}(P)$ 
and so $P$ is $HV$-symmetric.

(d) $\Rightarrow$ (a): By contrapositive, if $0 \notin P$ then $P$ has a facet
of the form $-1 + ax \ge 0$ and cannot be $HV$-symmetric.
\end{proof}

The usual method for computing $H(P)$ from $V(P)$ is by lifting
$P$ to a cone $C$ by prepending a column of zeroes to $V(P)$. 
Because cones are $HV$-symmetric the resulting matrix can
be used in an \hr~to \vr~conversion for $C$.
The output extreme rays are interpreted as facets of $P$.
By the above theorem, if $0 \in P$ the lifting is not necessary
and an $H$ to $V$ conversion can be applied directly to the matrix
$V(P)$. This has computational ramifications for pivot based algorithms,
such as reverse search which is used in \lrs, 
since the number of pivots and hence bases generated can be completely
different for the lifted and unlifted inputs.
Empirical evidence in \cite{AV98} showed that, surprisingly, the lifted problems
often had considerably fewer bases than the unlifted ones and hence
significantly faster running times. 
Determining conditions for when lifting reduces the number of bases
is an interesting open problem. 

Finally we discuss the asymmetry between \hr~and \vr~and the role
of the origin in being $HV$-symmetric. This is due to the fact that the
`hyperplane at infinity', $1 + 0_n x \ge 0$, coded as 
$[ 1~~0_n ]$ in (\ref{hr}), is valid for any \hr. To obtain
isomorphism with a \vr~it is necessary that the origin, also coded
as $[ 1~~ 0_n ]$, be included (perhaps redundantly) in $V(P)$.
That is, $0 \in P$.

\section*{Acknowledgment}
The author is grateful to Komei Fukuda for many useful discussions 
and for encouraging him to
write this note.
Detailed comments by two anonymous referees lead to substantial 
improvements in the
presentation.
This research was supported by JSPS
Kakenhi Grants
 20H00579,  %imai kiban(a)
 20H00595,  %arimura kiban(a)
 20H05965,  %kawarabayashi afsa
 22H05001  %kawarabayashi kiban(s)
 and 23K11043.  %kiban(c)

% Bibliography
%\bibliographystyle{plain}
%\bibliography{dual2}

\end{document}